\newcommand{\sL}{{\mathcal L}}
\newcommand{\sM}{{\mathcal M}}
\newcommand{\sO}{{\mathcal O}}
\newcommand{\C}{{\mathbb C}}
\renewcommand{\H}{{\mathbb H}}
\renewcommand{\L}{{\mathbb L}}
\renewcommand{\P}{{\mathbb P}}
\newcommand{\Q}{{\mathbb Q}}
\newcommand{\V}{{\mathbb V}}
\newcommand{\Z}{{\mathbb Z}}
\newcommand{\lra}{\longrightarrow }
\newcommand{\Ab}{{\theta}}
\newcommand{\End}{{\rm End}}
\theoremstyle{plain}
\newtheorem{thm}{Theorem}
\newtheorem{cor}[thm]{Corollary}
\newtheorem{prop}[thm]{Proposition}
\newtheorem{remark}[thm]{Remark}
\numberwithin{thm}{section}
\numberwithin{equation}{section}
\begin{document}
\title{Hodge classes associated to $1$-parameter families of Calabi-Yau $3$-folds}
\author{Pedro Luis del Angel}
\author{Stefan M\"uller-Stach}
\author{Duco van Straten}
\author{Kang Zuo}

\thanks{This work is entirely supported by DFG Sonderforschungsbereich/Transregio 45}                           
\begin{abstract}
We use $L^2$-Higgs cohomology to determine the Hodge numbers of the parabolic 
cohomology $H^1(\bar S, j_*\V)$, where the local system $\V$ arises from the
third primitive cohomology of family of Calabi-Yau threefolds over a curve $\bar S$.
The method gives a way to predict the presence of algebraic 2-cycles in the total space of the
family and is applied to some examples.
\end{abstract}

\maketitle

\section*{Introduction} 
We consider a smooth projective family of varieties $f: X \longrightarrow S$ over
a smooth curve $S$. After a finite pull-back we may assume that the family has 
a semi-stable completion $\bar f: \bar X \longrightarrow \bar S$, giving rise to a diagram
\[ \begin{array}{rcl}
X & \hookrightarrow & \bar X\\
f \downarrow&&\downarrow \bar f\\
S& \stackrel{j}{\hookrightarrow}& \bar S\\
\end{array}
\]
In such a situation the action of the local monodromy around each point of the boundary $D:=\bar S \setminus S$ on the local systems of $k$-cohomologies $R^kf_*\C_X$ is unipotent. These local systems carry a polarised variation of Hodge structures (VHS), where the Hodge filtration arises from the truncation of the
relative logarithmic deRham complex
\[ \Omega_{\bar X/\bar S}^{\bullet}(\log \Delta)\]
where $\Delta:=f^{-1}(D)$ is a reduced normal crossing divisor.\\

It is a very old idea that goes back to Poincar\'e to study cycles on $ \bar X$ 
by looking at their slicing by the fibres of the map. 
If $D \neq \emptyset$, then the Leray spectral sequence shows that the space of
 cohomology classes of $X$ that restrict to zero on fibres is the cohomology on $S$ of the local system 
of $k$-cohomologies
\[Ker(H^{k+1}(X) \longrightarrow H^0(S,R^{k+1}f_*\C_X))=H^1(S, R^kf_* \C_X).\]
It carries a canonical mixed Hodge structure and the subgroup 
\[H^1(\bar S, j_* R^kf_* \C_X)=H^1(\bar S, R^k\bar f_* \C_{\bar X}) \subset H^1(S, R^kf_* \C_X)\]
can be identified with the part of lowest weight $k+1$.
More generally, one can consider (irreducible) local systems $\V$ that
underly a weight $k$ VHS. The {\em intersection homology group} (also called the {\em parabolic cohomology group}) 
\[
H^1(\overline{S},j_*\V)
\]
then carries a natural Hodge structure of weight $k+1$, a result due to
S. Zucker, \cite{zucker}. In that paper, Zucker defined a local 
$L^2$-integrability condition for  $\mathcal C^\infty$-sections of  deRham complex twisted by the local system $\V$, using the Hodge metric and the 
Poincar\'e metric near the boundary points. Using the fundamental estimate 
of W. Schmid on the growth of the Hodge metric, he showed that a local section
 of it is $L^2$-integrable if and only if it lies in the monodromy weight 
filtration in the certain given range.
Using an $L^2$-Poincar\'e lemma, he showed that the subcomplex of $L^2$-integrable 
forms can be used to compute the intersection cohomology. In fact, using local information, one can define a
 certain complex $\Omega_{(2)}^{\bullet}(\V)$, whose hypercohomology computes
the intersection homology:
\[H_{(2)}^1(S,\V) := \H^1(\Omega_{(2)}^{\bullet}(\V))=H^1(\overline{S},j_*\V).\]
The complex carries a natural Hodge filtration and the associated
spectral sequence 
\[ \H^q(Gr_F^p \Omega_{(2)}^{\bullet}(\V)) \Rightarrow H^{p+q}_{(2)}(S,\V)\]
degenerates at $E_1$ and induces the pure Hodge structure on the limit.\\

In this paper we will focus attention to local systems that arise from 
non-trivial pencils of Calabi-Yau 3-folds. So we 
have $\bar S=\P^1$ and 
$f: X \to S=\P^1 \setminus D$ a smooth family of Calabi-Yau $3$-folds whose Kodaira-Spencer map is not identically zero, hence 
\[\dim_\C H^1(X_s,\Omega^2_{X_s})=\dim_\C H^{1}(X_s,\Theta_{X_s})=1.\]
As a consequence, the local system $\V:=R^3f_*\C_X$ of (primitive) middle 
cohomology is irreducible and has Hodge numbers $(1,1,1,1)$. The cohomology group
\[H:=H^1(\bar S, j_* \V)= H^1_{(2)}(S,\V)\]
carries a pure Hodge structure of weight $4$:
\[H=H^{4,0} \oplus H^{3,1} \oplus H^{2,2} \oplus H^{1,3}\oplus H^{0,4}.\]

In this paper, we will identify the Hodge numbers of $H$ in terms of geometric 
data of the family. We hope it will become clear to the reader how these ideas
can be used to obtain formulae in far more general situations.
Of particular interest is the Hodge space $H^{2,2}$, 
as any cycle $Z \in CH^2(\bar X)$ (homologous to zero on all fibres of $f$)
will give a class in it, as will be recalled in section 4. On the other 
hand, a class in $H^1(\bar S, j_*\V)$ also defines an extension of local 
systems, which is the same as the one
obtained from an inhomogenous Picard-Fuchs differential equations of 
type \[{\sL} \Phi_Z =g\] associated to the normal function $\Phi_Z$ 
of the cycle $Z$. Finally, we describe some examples that were recently 
considered in \cite{walcher1} and \cite{walcher2}.

\section{The $L^2$-Higgs complex}

Recall that a {\em logarithmic Higgs bundle} $(E,\theta)$ on $\bar S$ consists 
of a holomorphic vector bundle $E$, together with an $\sO_{\bar S}$-linear map
\[ E \stackrel{\theta}{\longrightarrow} E \otimes \Omega^1_{\bar S}(\log D).\]
In the geometrical case, $E$ is the sum of the Hodge bundles:
\[E=\oplus_{p+q=k} E^{p,q},\;\;\; E^{p,q}:=R^q\bar f_*(\Omega^{p}_{\bar X/\bar S}(\log \Delta))\]
and the Higgs field is obtained from cup-product with the Kodaira-Spencer class. 
By Griffiths transversality, it has components
\[ E^{i,j} \stackrel{\theta}{\longrightarrow} E^{i-1,j+1} \otimes \Omega^1_{\bar S}(\log D).\]

In a paper of Jost, Yang and the last named author \cite{jyz}, the local $L^2-$condition
of Zucker was adapted to the more general setting of logarithmic Higgs  bundles.

If $(E,\theta)$ is a logarithmic Higgs bundle on $\bar S$ and  $p \in D:=\bar S \setminus S$ is a boundary point, then the {\em residue}
of the Higgs-field $\theta$ at $p$ 
\[N=N_p:=Res_p(\theta) \in \End(E_p)\]
is a nilpotent endomorphism and so determines a weight filtration $W_\bullet$ on
the fibre $E_p$.\footnote{
In terms of the local system $\V$, the endomorphism $N_p$ can be identified with $\log T_p$, the logarithm of the local monodromy operator around the puncture $p$.}

(Recall that any nilpotent endomorphism $N$ of a vector space 
$V$ determines in a canonical way an increasing filtration on $V$, called the 
{\em monodromy weight filtration}:
\[
0 \subset W_{-m} \subset W_{-m+1} \subset \cdots \subset W_0 \subset W_{1} \subset \cdots \subset W_{m}=V. 
\]
It is defined as follows: if $N^{m+1}=0$ but $N^m \neq 0$ we put
\[
W_{m-1}={\rm Ker}(N^m), \quad W_{-m}={\rm Im}(N^m).
\]
Then inductively $W_k$ is constructed in such a way that $N(W_k)={\rm Im}(N) \cap W_{k-2} \subset W_{k-2}$ and 
$$
N^k: {\rm Gr}^W_{k}(V) \to {\rm Gr}^W_{-k}(V) 
$$
are isomorphisms.)\\ 
These weight filtrations on $E_p,\; p \in D$ can be used to define a
certain complex $(\Omega_{(2)}^{\bullet}(E),\theta)$, called the
{\em $L^2$-Higgs complex.} It is determined by sub-sheaves
\[ \Omega_ {(2)}^0(E) \subset E,\;\;\;\Omega_{(2)}^1(E) \subset E \otimes \Omega_{\bar S}^1(\log D)\]

which are defined near $p \in D$ by
\[
\Omega^0_{(2)}(E):=W_0 + tE, \quad \Omega^1_{(2)}(E):=(W_{-2} + tE) \otimes \Omega^1_{\bar S}(\log D).\]
Here $t$ is a local parameter near a boundary point $p=\{t=0\} \in D$ and
the weight filtration is defined by $N_p$.\\  

It can be shown \cite{jyz} that the hypercohomology of the $L^2-$ Higgs complex 
$(\Omega_{(2)}^{\bullet}(E),\theta)$ is isomorphic to the  
$L^2$-cohomology
\[ H^{k}_{(2)}(S,E)=\H^k(\Omega_{(2)}^{\bullet}(E),\theta).\]
\vskip 20pt
We now spell out the local structure of $(\Omega_{(2)}^{\bullet},\theta)$ for the case of logarithmic Higgs bundles of type $(1,1,1,1)$, so that
\[
E=E^{3,0} \oplus E^{2,1} \oplus E^{1,2} \oplus E^{0,3}
\]

and each summand is a line bundle.
At an ordinary point $p \in S$ the $L^2$-Higgs complex of course coincides with the Higgs bundle itself, so that we have
\[
\Omega^0_{(2)}=E^{3,0}\oplus E^{2,1}\oplus E^{1,2} \oplus E^{0,3}, \quad 
\Omega^1_{(2)}= \left(E^{3,0}\oplus E^{2,1}\oplus E^{1,2} \oplus E^{0,3} \right) \otimes \Omega^1_{\bar S}.
\]
For the points $p \in D$ one has to distinguish cases corresponding to the 
possible Jordan-forms of the endomorphism $N=N_p$.

There are two subcases that appear when $m=1$, that is $N^2=0$, but $N\neq 0$.\\

\subsection*{Point of type I } Here we have a single Jordan-block of the form
\[
N=\left( \begin{matrix} 0 & 0 & 0 & 0 \cr 0 & 0 & 1 & 0 \cr 0 & 0 & 0 & 0 \cr 0 & 0 & 0 & 0 \end{matrix} \right)
\]
The monodromy weight filtration has the following form: 
\[
0=W_{-1} \subset W_{0}={\rm Ker}(N)  \subset W_{1}=V.
\]
In this case in the sequence $E^{3,0} {\buildrel N \over \to} E^{2,1} {\buildrel N \over \to} E^{1,2}{\buildrel N \over \to}  E^{0,3}$ at the point $p=\{t=0\}$ the middle map must be an isomorphism, since $\dim  {\rm Ker}(N)=3$. Hence the left and right map must both be zero. 
Therefore the $L^2$-complex near $p$ looks like: 

\[
\begin{array}{cccccccccccl}
\Omega^0_{(2)}&=&0&\oplus &E^{3,0}&\oplus& tE^{2,1}&\oplus& E^{1,2}& \oplus &E^{0,3}&\\
\downarrow \theta&&\downarrow&&\downarrow&&\downarrow&&\downarrow&&\downarrow&\\
\Omega^1_{(2)}&=& \left(\; tE^{3,0}\right.&\oplus& tE^{2,1}&\oplus& tE^{1,2}& \oplus&  tE^{0,3}&\oplus&0&\hspace{-5mm}
\left. \right) \otimes \Omega^1_{\bar S}(\log D)\\
\end{array}
\]

\subsection*{Point of type II } Here we have two Jordan-blocks.
\[
N=\left( \begin{matrix} 0 & 1 & 0 & 0 \cr 0 & 0 & 0 & 0 \cr 0 & 0 & 0 & 1 \cr 0 & 0 & 0 & 0 \end{matrix} \right)
\]
The monodromy weight filtration now has the following form 
\[
0=W_{-1} \subset W_{0}={\rm Ker}(N)  \subset W_{1}=V.
\]
In this case in the sequence $E^{3,0} {\buildrel N \over \to} E^{2,1} {\buildrel N \over \to} E^{1,2}{\buildrel N \over \to}  E^{0,3}$
at the point $p=\{t=0\}$, the middle map must be zero, since $\dim  {\rm Ker}(N)=2$ and the left and right maps are dual to each other. 
Therefore the $L^2$-subcomplex has the local form:

\[
\begin{array}{cccccccccccl}
\Omega^0_{(2)}&=&0&\oplus &tE^{3,0}&\oplus& E^{2,1}&\oplus& tE^{1,2}& \oplus &E^{0,3}&\\
\downarrow \theta&&\downarrow&&\downarrow&&\downarrow&&\downarrow&&\downarrow&\\
\Omega^1_{(2)}&=& \left(\; tE^{3,0}\right.&\oplus& tE^{2,1}&\oplus& tE^{1,2}& \oplus&  tE^{0,3}&\oplus&0&\hspace{-5mm}
\left. \right) \otimes \Omega^1_{\bar S}(\log D).\\
\end{array}
\]

\subsection*{Case m=2} Assume that $N^2 \neq 0$ and $N^3=0$, so we may assume  
\[
N=\left( \begin{matrix} 0 & 1 & 0 & 0 \cr 0 & 0 & 1 & 0 \cr 0 & 0 & 0 & 0 \cr 0 & 0 & 0 & 0 \end{matrix} \right).
\]
At the point $p$ the first and the last maps in the sequence 
\[E^{3,0} {\buildrel N \over \to} E^{2,1} {\buildrel N \over \to} E^{1,2}{\buildrel N \over \to}  E^{0,3}\]
are dual to each other. If they are both isomorphisms, then $N^3=0$ implies that the middle map $E^{2,1} {\buildrel N \over \to} E^{1,2}$ is the zero map.
Hence we already have $N^2=0$. Therefore this case cannot happen.\\

\subsection*{Point of type III} Assume now that $m=3$, so $N^3 \neq 0$, so this 
case corresponds  to points of {\em maximal unipotent monodromy}, 
\[
N=\left( \begin{matrix} 0 & 1 & 0 & 0 \cr 0 & 0 & 1 & 0 \cr 0 & 0 & 0 & 1 \cr 0 & 0 & 0 & 0 \end{matrix} \right).
\]
The monodromy weight filtration has the following form:
\[
0 \subset W_{-3}=W_{-2}={\rm Ker}(N) \subset W_{-1}=W_{0}={\rm Ker}(N^2)  \subset W_{1}=W_{2}={\rm Ker}(N^3) \subset V.
\]
Since $N^3 \neq 0$ and the $E^{p,q}$ have rank one, the composition 
\[E^{3,0} {\buildrel N \over \to} E^{2,1} {\buildrel N \over \to} E^{1,2}{\buildrel N \over \to}  E^{0,3}\]
at the point $p=\{t=0\}$ is an isomorphism. The $L^2$-subcomplex takes the following form 

\[
\begin{array}{cccccccccccl}
\Omega^0_{(2)}&=&0&\oplus &tE^{3,0}&\oplus& tE^{2,1}&\oplus& E^{1,2}& \oplus &E^{0,3}&\\
\downarrow \theta&&\downarrow&&\downarrow&&\downarrow&&\downarrow&&\downarrow&\\
\Omega^1_{(2)}&=& \left(\; tE^{3,0}\right.&\oplus& tE^{2,1}&\oplus& tE^{1,2}& \oplus&  E^{0,3}&\oplus&0&\hspace{-5mm}\left. \right)\otimes \Omega^1_{\bar S}(\log D).\\
\end{array}
\]

\section{Hodge Filtration}

One of the main points of the constructions of the $L^2$-Higgs complex is that one 
obtains a more or less {\em  explicit} description of the Hodge-decomposition on 
$H^1(\overline{S},j_*\V)$.
The obvious Hodge filtration 
\[ F^k \subset F^{k-1} \subset \ldots \subset F^0,\;\;\;F^p := \bigoplus_{i\ge p} E^{i,k-i}\]
on $E$ defines by intersection  with the $L^2$-subcomplex $\Omega_{(2)}^\bullet(E)$ a filtration
and the corresponding hypercohomology spectral sequence degenerates at $E_1$.

It appears from description of the previous section, that the complex $\Omega_{(2)}^\bullet(E)$
is in fact the direct sum of graded pieces
\[ \Omega_{(2)}^\bullet(E)=\oplus_{p+q=k}\Omega_ {(2)}^{\bullet}(E)^{p,q} \]
where
\[\Omega_{(2)}^0(E)^{p,q}:=\Omega_{(2)}^0(E) \cap E^{p,q}\]
\[\Omega_{(2)}^1(E)^{p,q}:=\Omega_{(2)}^1(E) \cap E^{p-1,q+1} \otimes \Omega_{\bar S}^1(\log D)\]

In the case of a logarithmic Higgs bundle of type $(1,1,1,1)$ considered before,
the $F$-graded pieces have the following form:

\[
\begin{array}{|c|ccc|}
\hline
&0&&1\\[1mm]
\hline
&&&\\
Gr_F^4\Omega_{(2)}^\bullet(E)&0&\lra&\Omega^{1}_{(2)}(E)^{3,0}\\[4mm]
\hline
&&&\\
Gr_F^3\Omega_{(2)}^\bullet(E)&\Omega_{(2)}^0(E)^{3,0}&\stackrel{\theta}{\lra}&\Omega^{1}_{(2)}(E)^{2,1}\\[4mm]
\hline
&&&\\
Gr_F^2\Omega_{(2)}^\bullet(E)&\Omega_{(2)}^{0}(E)^ {2,1}&\stackrel{\theta}{\lra}&\Omega^{1}_{(2)}(E)^{1,2}\\[4mm]
\hline
&&&\\
Gr_F^1\Omega_{(2)}^\bullet(E)&\Omega_{(2)}^{0}(E)^{1,2}&\stackrel{\theta}{\lra}&\Omega^1_{(2)}(E)^{0,3}\\[4mm]
\hline
&&&\\
Gr_F^0\Omega_{(2)}^\bullet(E)&\Omega_{(2)}^{0}(E)^{0,3}&\stackrel{\theta}{\lra}&0\\[4mm]
\hline
\end{array}
\]

From this we obtain the following expression for the Hodge spaces $H^{p,q}$ of $H^1_{(2)}(S,E)$:

\begin{thm}
\[
\begin{array}{rcl} 
H^{4,0} & =&H^0(\bar S,E^{30}\otimes \Omega_{\bar S}^1)  \\[2mm]
H^{3,1} & =&H^0(\bar S,\Omega^1_{(2)}(E)^{2,1}/\theta(\Omega^0_{(2)}(E)^{3,0})) \\[2mm]
H^{2,2} & =&H^0(\bar S,\Omega^1_{(2)}(E)^{1,2}/\theta(\Omega^0_{(2)}(E)^{2,1})) \\[2mm]
H^{1,3} & =&H^0(\bar S,\Omega^1_{(2)}(E)^{3,0}/\theta(\Omega^0_{(2)}(E)^{1,2})) \\[2mm]
H^{0,4} & =&H^1(\bar S, E^{03})\\[2mm]
\end{array}
\]
\end{thm}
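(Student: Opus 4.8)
The plan is to read off each Hodge space $H^{p,4-p}$ as the first hypercohomology of the corresponding $F$-graded piece $Gr_F^p\Omega_{(2)}^\bullet(E)$ displayed in the table. By \cite{jyz} we have $H^1_{(2)}(S,E)=\H^1(\Omega_{(2)}^\bullet(E),\theta)$, and since the filtration $F$ induces on this hypercohomology a spectral sequence that degenerates at $E_1$, the induced $F$-filtration on $H^1_{(2)}(S,E)$ is the Hodge filtration and $Gr_F^pH^1_{(2)}(S,E)=\H^1(Gr_F^p\Omega_{(2)}^\bullet(E))$. As $H$ is pure of weight $4$ we have $Gr_F^pH=H^{p,4-p}$, so everything reduces to computing $\H^1$ of each of the five two-term complexes in the table.

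For the two extreme pieces I would first identify the single sheaf involved. The piece $Gr_F^0$ is the complex $\Omega_{(2)}^0(E)^{0,3}\to 0$ concentrated in degree $0$, so $\H^1=H^1(\bar S,\Omega_{(2)}^0(E)^{0,3})$; inspecting the local tables of Section 1 shows that at every boundary point (types I, II, III) the $(0,3)$-summand of $\Omega_{(2)}^0(E)$ is the full $E^{0,3}$, whence $\Omega_{(2)}^0(E)^{0,3}=E^{0,3}$ globally and $H^{0,4}=H^1(\bar S,E^{0,3})$. Dually, $Gr_F^4$ is $0\to\Omega_{(2)}^1(E)^{3,0}$ concentrated in degree $1$, so $\H^1=H^0(\bar S,\Omega_{(2)}^1(E)^{3,0})$; here the tables give the $(3,0)$-summand as $tE^{3,0}\otimes\Omega^1_{\bar S}(\log D)$ at each boundary point, and since $t\cdot\frac{dt}{t}=dt$ this equals $E^{3,0}\otimes\Omega^1_{\bar S}$ with honest (non-logarithmic) poles, giving $H^{4,0}=H^0(\bar S,E^{3,0}\otimes\Omega^1_{\bar S})$.

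For each of the three middle pieces $Gr_F^p$, $p=1,2,3$, the complex is $\sA_0\stackrel{\theta}{\to}\sA_1$ with $\sA_0=\Omega_{(2)}^0(E)^{p,3-p}$ and $\sA_1=\Omega_{(2)}^1(E)^{p-1,4-p}$, both invertible sheaves on $\bar S$. Its cohomology sheaves are $\ker\theta$ in degree $0$ and the cokernel $\sA_1/\theta(\sA_0)$ in degree $1$, and the second hypercohomology spectral sequence $H^a(\bar S,\sH^b)\Rightarrow\H^{a+b}$ gives, in total degree $1$, the exact sequence $0\to H^1(\bar S,\ker\theta)\to\H^1\to H^0(\bar S,\sA_1/\theta(\sA_0))\to 0$, the differential $d_2$ landing in $H^2(\bar S,\ker\theta)=0$ because $\bar S$ is a curve. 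The decisive point is that $\theta$ be injective on each middle piece. Each component of the Higgs field is a map of line bundles on the irreducible curve $\bar S$, hence is either zero or injective; the top component $E^{3,0}\to E^{2,1}\otimes\Omega^1$ is nonzero by the Kodaira--Spencer hypothesis, the bottom one $E^{1,2}\to E^{0,3}\otimes\Omega^1$ is nonzero by duality under the polarization, and the middle one $E^{2,1}\to E^{1,2}\otimes\Omega^1$ is nonzero because the Yukawa cubing $\theta^3$ does not vanish for a family whose local system $\V$ is irreducible. Thus $\ker\theta=0$ on each middle piece, the left-hand term drops out, and $\H^1=H^0(\bar S,\sA_1/\theta(\sA_0))$, which is exactly the stated formula for $H^{3,1}$, $H^{2,2}$ and $H^{1,3}$.

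I expect the main obstacle to be precisely the injectivity of the middle Higgs component: the outer maps are pinned down directly by Kodaira--Spencer and polarization duality, but the middle map needs the non-degeneracy of the Yukawa coupling, equivalently the irreducibility of $\V$, and this is what guarantees that each middle Hodge space is a plain $H^0$ of a cokernel sheaf with no correction from $H^1(\ker\theta)$. A secondary, purely local, point to execute carefully is the boundary bookkeeping for the two extreme terms, namely verifying from the three tables that the $L^2$-conditions turn the logarithmic pole into an ordinary differential in the $(3,0)$-slot while leaving the $(0,3)$-slot untwisted.
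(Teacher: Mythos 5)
Your proof is correct and follows essentially the same route as the paper's: read off $Gr_F^pH$ from the $E_1$-degenerate spectral sequence, identify $\Omega^1_{(2)}(E)^{3,0}=E^{3,0}\otimes\Omega^1_{\bar S}$ and $\Omega^0_{(2)}(E)^{0,3}=E^{0,3}$ locally for the extreme pieces, and use injectivity of $\theta$ to replace each middle two-term complex by its cokernel at spot one. The only difference is that you justify the injectivity of the three $\theta$-components (Kodaira--Spencer, polarization duality, non-vanishing Yukawa coupling via irreducibility of $\V$), which the paper merely asserts, and you silently correct the paper's typo $\Omega^1_{(2)}(E)^{3,0}$ to $\Omega^1_{(2)}(E)^{0,3}$ in the $H^{1,3}$ row.
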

\begin{proof}
From the local description of the $L^2$- Higgs complex one sees 
\[\Omega_{(2)}^{1}(E)^{3,0}=E^{3,0}\otimes \Omega_{\bar S}^1,\;\;\;\Omega_{(2)}^0(E)^{0,3}=E^{0,3}\]
which gives the $H^{4,0}$ and $H^{0,4}$-spaces.
For the other three we remark that the map $\theta$ is injective, so the
two-term complex is quasi-isomorphic to the corresponding cokernel (at spot one).
\end{proof}

\section{Numerical consequences}

We can use the above description of the $L^2$-Higgs complex and its 
Hodge-graded pieces to obtain explicit formulae for the Hodge numbers of
\[ H:=H^1_{(2)}(S,E)\]
in terms of elementary geometric data. As before, $E$ is a logarithmic
Higgs-bundle on a curve $\bar S$ (of genus $g(\bar S)$) of type $(1,1,1,1)$.

Using the local descriptions of the previous section, we can give a global description of
the $L^2$-Higgs complex.

\begin{prop} Let 
\[\sO_{\bar S}(-I),\;\;\;\;\sO_{\bar S}(-II),\;\;\;\;\sO_{\bar S}(-III),\;\;\;\;\]
denote the ideal sheaves of the points of type $I$, $II$ and $III$ respectively.
Then the graded pieces of the terms of the $L^2$-Higgs complex $\Omega^\bullet_{(2)}(E)$
are given by

\[
\begin{array}{|ccl|ccl|}
\hline
&&&&&\\
&&&\Omega^{1}_{(2)}(E)^{3,0}&=&E^{3,0} \otimes \Omega^1_{\bar S}\\[4mm]
\hline
&&&&&\\
\Omega_{(2)}^0(E)^{3,0}&=&E^{3,0}(-II-III) &\Omega^{1}_{(2)}(E)^{2,1}&=&E^{2,1} \otimes \Omega_{\bar S}^1 \\[4mm]
\hline
&&&&&\\
\Omega_{(2)}^{0}(E)^ {2,1}&=&E^{2,1}(-I-III)  &\Omega^{1}_{(2)}(E)^{1,2}&=&E^{1,2} \otimes \Omega_{\bar S}^1\\[4mm]
\hline
&&&&&\\
\Omega_{(2)}^{0}(E)^{1,2}&=&E^{1,2}(-II) &\Omega^1_{(2)}(E)^{0,3}&=&E^{0,3}(III) \otimes \Omega_{\bar S}^1   \\[4mm]
\hline
&&&&&\\
\Omega_{(2)}^{0}(E)^{0,3}&=&E^{0,3}&&&\\[4mm]
\hline
\end{array}
\]
\end{prop}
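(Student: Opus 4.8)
The plan is to compute each graded piece locally at every boundary point and then glue the local data into a global line-bundle expression using the ideal sheaves $\sO_{\bar S}(-I)$, $\sO_{\bar S}(-II)$ and $\sO_{\bar S}(-III)$. The starting point is the defining formula $\Omega^0_{(2)}(E) = W_0 + tE$ and $\Omega^1_{(2)}(E) = (W_{-2}+tE)\otimes\Omega^1_{\bar S}(\log D)$ near each $p\in D$, together with the four explicit local tables already worked out in Section~1 for points of type $I$, $II$, $III$. Away from $D$ the complex agrees with the Higgs bundle itself, so each summand $\Omega^0_{(2)}(E)^{p,q}$ coincides with $E^{p,q}$ and each $\Omega^1_{(2)}(E)^{p,q}$ coincides with $E^{p,q}\otimes\Omega^1_{\bar S}$; all the correction terms are therefore supported on $D$ and are read off summand-by-summand from the local pictures.

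First I would treat the zeroth-degree pieces. Inspecting the three tables, for the summand $E^{3,0}$ the local generator is $tE^{3,0}$ precisely at points of type $II$ and $III$ (and the full $E^{3,0}$ at type $I$), which is exactly the statement that $\Omega^0_{(2)}(E)^{3,0} = E^{3,0}(-II-III)$; the factor of $t$ is the local equation cutting out the corresponding point, so multiplying by $t$ at a point $q$ is the same as twisting the line bundle by $\sO_{\bar S}(-q)$. The same reading gives $E^{2,1}(-I-III)$ for the $(2,1)$-piece, $E^{1,2}(-II)$ for the $(1,2)$-piece, and $E^{0,3}$ (no vanishing anywhere) for the $(0,3)$-piece. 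Next I would do the first-degree pieces. Here the three top summands $\Omega^1_{(2)}(E)^{3,0}$, $\Omega^1_{(2)}(E)^{2,1}$, $\Omega^1_{(2)}(E)^{1,2}$ carry a factor of $t$ at \emph{every} boundary point in each table, and this factor is exactly cancelled by the $\Omega^1_{\bar S}(\log D)=\Omega^1_{\bar S}(D)$ appearing in the definition, so that these three reduce to the untwisted $E^{p,q}\otimes\Omega^1_{\bar S}$. The only asymmetric entry is $\Omega^1_{(2)}(E)^{0,3}$: its local generator is $tE^{0,3}$ at points of type $I$ and $II$ but the full $E^{0,3}$ at type $III$, so after cancelling one factor of $D$ against $\Omega^1_{\bar S}(\log D)$ one is left with an extra positive twist precisely at the type-$III$ points, yielding $E^{0,3}(III)\otimes\Omega^1_{\bar S}$.

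The main point requiring care, and the step I expect to be the real obstacle, is the precise bookkeeping between the logarithmic twist $\Omega^1_{\bar S}(\log D)$ built into the definition of $\Omega^1_{(2)}$ and the factors of $t$ appearing in the local tables, since every degree-one piece is first written with $\Omega^1_{\bar S}(\log D)$ and I must convert consistently to the ordinary $\Omega^1_{\bar S}$ used in the statement. Concretely, one uses $\Omega^1_{\bar S}(\log D)=\Omega^1_{\bar S}(D)$ and checks that the local factor $t$ in the $(3,0)$, $(2,1)$, $(1,2)$ slots exactly absorbs the pole at each $p\in D$ so as to produce the holomorphic $E^{p,q}\otimes\Omega^1_{\bar S}$ globally, while in the $(0,3)$ slot the mismatch at type $III$ (where no factor $t$ is present) leaves an uncancelled pole and hence the twist by $\sO_{\bar S}(III)$. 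Having verified the four local formulas at each type of point and the trivial identification at interior points, the global descriptions follow because a line subbundle of a fixed line bundle on a smooth curve is determined by its local vanishing orders, and these have now been computed at every point.
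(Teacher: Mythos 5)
Your proof is correct and follows essentially the same route as the paper, whose proof is the one-line observation that the formulas can be read off from the positions of the $t$'s in the local tables of Section~1. You simply make explicit what the paper leaves implicit: the identification of a local factor $t$ with a twist by the ideal sheaf of the point, the cancellation $\Omega^1_{\bar S}(\log D)=\Omega^1_{\bar S}(D)$ against those factors in the degree-one pieces (with the uncancelled pole at type~III points giving the twist $E^{0,3}(III)$), and the fact that a line subsheaf of a line bundle on a curve is determined by its local vanishing orders.
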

\begin{proof} This can be read off immediately by looking at the positions of the $t$'s in the local
description of the $L^2$-Higgs complex.
\end{proof}

We put 
\[a:=\deg(E^{3,0}),\;\;\;b:=\deg(E^{2,1}),\]
so that by self-duality of the Higgs-bundle we have
\[  \deg(E^{0,3})=-a,\;\;\;\deg(E^{1,2})=-b\]

Let $A, B, C$ be the numbers of singular points of type $I, II, III$ described above.

\begin{thm} The Hodge numbers of $H:=H^1_{(2)}(S,\V)$ are:
\begin{align*} 
h^{4,0} & = h^{0,4}=a-1+g(\bar S), \\
h^{3,1} & = h^{1,3}=B+C+b-a-2+2g(\bar S), \\
h^{2,2} & = A+C-2b-2+2g(\bar S).
\end{align*}
In particular we have 
\[
\dim_\C H=A+2B+3C+8(g(\bar S)-1). 
\]
\end{thm}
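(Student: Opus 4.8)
The plan is to compute each Hodge number directly from the formulas in Theorem~2.1, using the explicit global description of the graded pieces of the $L^2$-Higgs complex provided by the Proposition. The key inputs are: (i) the degree bookkeeping $\deg(E^{3,0})=a$, $\deg(E^{2,1})=b$, $\deg(E^{1,2})=-b$, $\deg(E^{0,3})=-a$ together with $\deg(\Omega^1_{\bar S})=2g(\bar S)-2$; (ii) the fact that $\theta$ is injective on each two-term graded complex, so the relevant $H^0$ of a cokernel is computed as $\deg(\text{target})-\deg(\text{source})$ plus an Euler-characteristic correction; and (iii) Riemann-Roch on the curve $\bar S$. First I would handle $h^{4,0}$: by Theorem~2.1 this is $H^0(\bar S, E^{3,0}\otimes\Omega^1_{\bar S})$, a line bundle of degree $a+2g(\bar S)-2$. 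Since $b>0$ can fail to force vanishing of $H^1$, one expects the family to be generic enough (non-trivial Kodaira–Spencer) that the relevant higher cohomology vanishes; granting this, Riemann–Roch gives $h^{4,0}=(a+2g-2)-g+1=a-1+g(\bar S)$, and dually $h^{0,4}=h^{4,0}$.

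Next I would treat the middle three Hodge numbers via the cokernel description. For $h^{3,1}$, Theorem~2.1 gives $H^0$ of $\Omega^1_{(2)}(E)^{2,1}/\theta(\Omega^0_{(2)}(E)^{3,0})$. By the Proposition the numerator is $E^{2,1}\otimes\Omega^1_{\bar S}$, of degree $b+2g(\bar S)-2$, and the source is $E^{3,0}(-II-III)$, of degree $a-B-C$. Since $\theta$ is injective, the cokernel sheaf is a skyscraper-plus-line-bundle whose length/degree is the difference, giving $h^{3,1}=(b+2g-2)-(a-B-C)=B+C+b-a-2+2g(\bar S)$, matching the claim; the dual computation for $h^{1,3}$ uses self-duality. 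For $h^{2,2}$ I would compute $H^0$ of $\Omega^1_{(2)}(E)^{1,2}/\theta(\Omega^0_{(2)}(E)^{2,1})$, where the numerator $E^{1,2}\otimes\Omega^1_{\bar S}$ has degree $-b+2g(\bar S)-2$ and the source $E^{2,1}(-I-III)$ has degree $b-A-C$. The cokernel degree is then $(-b+2g-2)-(b-A-C)=A+C-2b-2+2g(\bar S)$, as asserted.

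The main obstacle is justifying that each Hodge number equals the \emph{naive} degree difference rather than merely a lower bound coming from Riemann–Roch. This requires controlling the higher cohomology: because $\theta$ is a bundle map that is injective as a sheaf map but degenerates precisely at the singular points, the quotient complex is quasi-isomorphic to a genuine cokernel sheaf (a line bundle twisted down by an effective divisor, i.e. a torsion sheaf supported on the singular locus together with a locally free part). The cleanest way to make this rigorous is to use the $E_1$-degeneration of the Hodge spectral sequence (asserted in the excerpt) to identify $h^{p,q}$ with the dimension of $\H^1$ of the relevant one-term complex concentrated in degree one, and then invoke the vanishing of the obstructing $H^1$ on $\P^1$ (or its analogue for general $\bar S$) coming from positivity of the Hodge bundles $E^{3,0}$ established by the general theory of Higgs bundles with nilpotent residues. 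Once the individual Hodge numbers are in hand, the final dimension formula follows by pure arithmetic: summing $h^{4,0}+h^{0,4}+h^{3,1}+h^{1,3}+h^{2,2}$ gives
\[
2(a-1+g)+2(B+C+b-a-2+2g)+(A+C-2b-2+2g),
\]
in which every occurrence of $a$ and $b$ cancels, leaving $A+2B+3C+8(g(\bar S)-1)$, which is the stated total dimension of $H$.
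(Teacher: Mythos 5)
Your proposal is correct and follows essentially the same route as the paper: Riemann--Roch together with positivity of $E^{3,0}$ for $h^{4,0}$ (dually $h^{0,4}$), and degree-difference counts for the cokernels of the injective maps $\theta$ between the graded pieces $\Omega^0_{(2)}(E)^{p,q}\to\Omega^1_{(2)}(E)^{p-1,q+1}$, exactly as in the paper's proof, with the final dimension formula by summation. One small cleanup: since each $\theta$ there is an injective map of \emph{line bundles} on a curve, its cokernel is purely torsion (there is no ``locally free part''), so $h^0$ of the cokernel is just its length, equal to the degree difference, and the ``higher cohomology obstacle'' you worry about simply does not arise for the middle Hodge numbers --- the only place a vanishing argument is needed is $H^1(\bar S, E^{3,0}\otimes\Omega^1_{\bar S})=0$, which the paper, like you, gets from positivity of $E^{3,0}$.
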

\begin{proof} We use the description of the Hodge spaces from the previous section.
We have $H^{4,0}=H^0(\bar S,E^{3,0} \otimes \Omega_{\bar S}^1)$ and  so by Riemann-Roch
$h^{4,0}=a+2g-2-g+1=a-1+g$
as $H^1(\bar S,E^{3,0} \otimes \Omega_{\bar S}^1)=0$ by positivity of $E^{3,0}$ ($g=g(\bar S)$).

Recall that the space $H^{3,1}$ is represented as global sections of the cokernel of the map 
$\Omega_{(2)}^0(E)^{3,0}\stackrel{\theta}{\lra} \Omega_{(2)}^1(E)^{2,1}$, so we have to count the zeros 
of a bundle map
\[E^{3,0}(-II-III) \lra E^{2,1} \otimes \Omega_{\bar S}^1.\]
Their number is given by the difference in degree of the bundles involved, so
\[h^{3,1}=\deg E^{2,1} \otimes \Omega_{\bar S}^1-\deg E^{3,0}(-II-III)=b+2g-2-a+B+C\]

Similarly, the space $H^{2,2}$ is represented as global sections of the cokernel of the map 
$\Omega_{(2)}^{0}(E)^{2,1} \stackrel{\theta}{\lra} \Omega_{(2)}^1(E)^{1,2}$, so we have to 
count the zeros of a bundle map
\[ E^{2,1}(-I-III) \lra E^{1,2} \otimes \Omega_{\bar S}^1.         \]
and one obtains the dimension in a similar way.
By duality, the other Hodge numbers are determined. 
\end{proof}

\begin{cor}
In the special case $\dim_\C H \le 1$ we have $H=H^{2,2}$ and it follows that
\[g(\bar S)=0,\;\;\;a=1,\;\;\;b=\frac{1}{2} (A+C-2-\dim_\C H)\]. 
\end{cor}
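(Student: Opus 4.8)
The plan is to read the whole statement off the explicit Hodge numbers of Theorem~3.3 together with the elementary observation that each $h^{p,q}$ is a non-negative integer. First I would rewrite the total dimension, using the dualities $h^{4,0}=h^{0,4}$ and $h^{3,1}=h^{1,3}$, as $\dim_\C H = h^{4,0}+h^{3,1}+h^{2,2}+h^{1,3}+h^{0,4}=2h^{4,0}+2h^{3,1}+h^{2,2}$. Since every summand is $\ge 0$, the hypothesis $\dim_\C H\le 1$ forces $2h^{4,0}\le 1$ and $2h^{3,1}\le 1$; being even non-negative integers, both must vanish. Hence $h^{4,0}=h^{3,1}=0$, so $H=H^{2,2}$ and $\dim_\C H=h^{2,2}$. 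This settles the first assertion.

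Next I would extract $a$ and $g(\bar S)$ from the vanishing just obtained. By Theorem~3.3 the relation $h^{4,0}=0$ reads $a-1+g(\bar S)=0$, i.e. $a=1-g(\bar S)$. The one non-formal ingredient enters here: the positivity of the top Hodge bundle $E^{3,0}$, already invoked in the proof of Theorem~3.3 (it is exactly the vanishing $H^1(\bar S,E^{3,0}\otimes\Omega^1_{\bar S})=0$, valid because the Kodaira--Spencer map of the family is not identically zero). This gives $\deg E^{3,0}=a>0$, hence $a\ge 1$ as an integer. Combined with $a=1-g(\bar S)$ and $g(\bar S)\ge 0$, this pins down $g(\bar S)=0$ and $a=1$, yielding the second and third assertions.

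Finally, having $g(\bar S)=0$, I would specialise the formula $h^{2,2}=A+C-2b-2+2g(\bar S)$ of Theorem~3.3 to $\dim_\C H=h^{2,2}=A+C-2b-2$ and simply solve for $b$, obtaining $b=\frac{1}{2}(A+C-2-\dim_\C H)$, exactly as claimed.

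The proof is essentially bookkeeping with the Hodge numbers already computed; the only step that is not purely formal is the bound $a\ge 1$, which I expect to be the crux. However, it requires no new argument, since it is supplied by the strict positivity of the top Hodge bundle for a non-isotrivial family (Griffiths), the same input used to establish Theorem~3.3. I would therefore simply cite it rather than reprove it.
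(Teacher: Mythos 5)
Your proof is correct and is precisely the argument the paper intends: the corollary is stated without proof as an immediate consequence of Theorem~3.3, and your bookkeeping (writing $\dim_\C H=2h^{4,0}+2h^{3,1}+h^{2,2}$, forcing $h^{4,0}=h^{3,1}=0$, then using $a\ge 1$ from the strict positivity of $E^{3,0}$ together with $g(\bar S)\ge 0$ to pin down $g(\bar S)=0$, $a=1$, and solving $h^{2,2}=A+C-2b-2$ for $b$) fills in exactly the steps the authors left implicit. The one non-formal input, $\deg E^{3,0}\ge 1$, is indeed the same positivity the paper already invokes in proving Theorem~3.3, so citing it rather than reproving it is appropriate.
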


\begin{remark}
The $3$-fold iterated Kodaira-Spencer map $\theta$ gives rise to a homomorphism 
\[
\theta^{(3)}: (E^{3,0})^{\otimes 2} \longrightarrow S^3 \Omega^1_{\bar S}(\log D) 
\]
which is called the Griffiths-Yukawa coupling.  
The number of zeroes of $\theta^{(3)}$ counted with multiplicity is therefore given by 
the formula
\[
\sharp \{p \in \P^1 \mid \theta^{(3)}(p)=0\}=3(N-2+2g(\bar S))-2a. 
\]
Here
\[N:=A+B+C\]
denotes the total number of singularities.
\end{remark}

\begin{remark}
For a non-trivial, irreducible local system $\V$ there is
a simple classical formula for the dimension
\[h^1(\V):=\dim_\C H^1(\P^1,j_* \V)\]
in terms of the {\em ramification indices}
\[
R(p):={\rm Rank}(\V)- {\rm dim}(\V^{I(p)}). 
\]
where $I(p)$ is the local monodromy around the point $p$:
\begin{prop}
\[
h^1(\V)= \sum_{p \in D} R(p) +(2g(\bar S)-2) {\rm Rank}(\V). 
\]
\end{prop}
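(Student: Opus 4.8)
The plan is to deduce the formula from a computation of the Euler characteristic $\chi(\bar S, j_*\V)$, the key observation being that both $H^0$ and $H^2$ vanish, so that $h^1(\V)=-\chi(\bar S, j_*\V)$. The vanishing of $H^0(\bar S, j_*\V)=H^0(S,\V)=\V^{\pi_1(S)}$ is immediate from the irreducibility and non-triviality of $\V$. For $H^2$ I would invoke Poincar\'e--Verdier duality for the intersection complex: since $j_*\V$ is the intermediate extension of a local system on a curve and $\V$ (underlying a polarised VHS) is identified with its dual up to a Tate twist that does not affect dimensions, one gets $H^2(\bar S, j_*\V)\cong H^0(\bar S, j_*\V^\vee)^\vee$, which vanishes for the same reason. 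Hence $h^1(\V)=-\chi(\bar S, j_*\V)$.

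Next I would reduce the cohomology of $j_*\V$ to compactly supported cohomology on $S$ together with the local contributions at $D$. There is a short exact sequence of sheaves on $\bar S$
\[
0 \longrightarrow j_!\V \longrightarrow j_*\V \longrightarrow \bigoplus_{p \in D} i_{p,*}\bigl(\V^{I(p)}\bigr) \longrightarrow 0,
\]
where $i_p$ is the inclusion of $p$ and the quotient is the skyscraper sheaf whose stalk is the space of local monodromy invariants $(j_*\V)_p=\V^{I(p)}$. Additivity of the Euler characteristic then yields
\[
\chi(\bar S, j_*\V)=\chi(\bar S, j_!\V)+\sum_{p \in D}\dim \V^{I(p)}=\chi_c(S,\V)+\sum_{p \in D}\dim \V^{I(p)},
\]
using that $\chi(\bar S, j_!\V)=\chi_c(S,\V)$ by the very definition of extension by zero.

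The final ingredient is the topological invariance $\chi_c(S,\V)={\rm Rank}(\V)\cdot \chi_c(S)$, which I would prove by choosing a finite triangulation of $S$ on whose (open) cells $\V$ restricts to a trivial system, so that the alternating sum of the ranks of the cochain groups is exactly ${\rm Rank}(\V)$ times the alternating sum of the numbers of cells. Combining this with $\chi_c(S)=\chi(S)=(2-2g(\bar S))-|D|$ for a curve of genus $g(\bar S)$ with $|D|$ punctures, and with the defining relation $\dim \V^{I(p)}={\rm Rank}(\V)-R(p)$, the terms $|D|\cdot {\rm Rank}(\V)$ cancel and one is left with
\[
-\chi(\bar S, j_*\V)=(2g(\bar S)-2)\,{\rm Rank}(\V)+\sum_{p \in D} R(p),
\]
which is precisely the asserted value of $h^1(\V)$.

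The step I expect to require the most care is the vanishing of $H^2$, that is, the appeal to duality for the intermediate extension $j_*\V$; here one must actually use that $\V$ carries a polarisation identifying it with $\V^\vee$ up to a twist, so that the pairing $H^2\cong (H^0)^\vee$ is legitimate. This is the single place where the hypotheses on $\V$ enter in an essential way, the remaining steps being the standard additive bookkeeping of Euler characteristics.
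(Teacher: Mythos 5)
Your proof is correct and takes essentially the same route as the paper: both arguments compute $\chi(\bar S, j_*\V)$ via the exact sequence $0 \to j_!\V \to j_*\V \to i^*j_*\V \to 0$ together with the multiplicativity $\chi_c(S,\V)={\rm Rank}(\V)\,\chi(S)$, and then identify $h^1$ with $-\chi$ by killing $H^0$ and $H^2$. The only immaterial difference is in the $H^2$ step, where you invoke Verdier duality for the intermediate extension plus the polarisation $\V\cong\V^\vee$; the paper instead uses $H^2(j_*\V)=H^2(j_!\V)=H^2_c(S,\V)\cong H^0(S,\V^*)^*$, which vanishes simply because $\V^*$ is again irreducible and non-trivial --- so the polarisation you flag as the essential input is actually not needed.
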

\begin{proof}
From topology one obtains
\[\chi(j_*\V)=\chi(S) \cdot {\rm Rank}(\V)+\sum_{p \in D}\dim \V^{I(p)}=(2-2g) {\rm Rank}(\V)-\sum _{p \in D} R(p).\]
From the exact sequence
\[ 0 \lra j_!\V \lra j_*\V \lra i^*j_*\V \lra 0\]
(where $i:D \hookrightarrow \bar S$) one gets $H^2(j_!\V)=H^2(j_*\V)$ and
from Poincar\'e duality one gets
\[H^2(j_!\V)=H^2_c(S,\V)=H^0(S,\V^*)^*\]
As $H^0(S,\V^*)=0$ the result follows.
\end{proof} 

\end{remark}

\section{Cycle classes}

Recall that the space $H^1(S,\V)$ is naturally identified with the space of 
{\em extension classes}
\[ 0 \lra \V \lra \widehat{\V} \lra \C_S \lra 0\]
of local systems on $S$. The parabolic subspace 
$H^1(\bar S,j_*\V) \subset H^1(S,\V)$ consists precisely of those extensions which 
split locally near each point $p \in D$, see e.g. \cite{stiller2}. If the local 
system is described as horizontal sections or solutions to a homogeneous differential equation on $S$
\[ \sL \Phi=0,\]
then extensions are described in terms of inhomogeneous equations of the form
\[ \sL \Psi =g\]
where $g \in \sM_S$ is meromorphic function on $S$.
The local system of solutions to the extended equation
\[
\widehat{\sL} \Phi=0, \quad \widehat {\sL}=(g\frac{d}{dt}-g'){\sL}.
\]
make up the local system $\widehat{\V}$.
This leads to a description of the parabolic cohomology by an exact sequence of
the form
\[ 0 \lra \sL (\sM_S) \lra \sM^{para}_S \lra H^1(\bar S,j_*\V ) \lra 0,\] 
where $\sM^{para}$ is the set of meromorphic functions on $\bar S$ that satisfy a
{\em parabolic condition} in the points $p \in D$. We refer to \cite{stiller1} and
\cite{stiller2} for details of this construction.\\ 

Consider as in the introduction a semi-stable morphism $\bar f : \bar X \to \bar S$ which extends a 
smooth map $f:X \lra S$. We set $\Delta= \bar f^{-1}(D)$ be the inverse image of  $D:=\bar S \setminus S$ , a reduced simple normal crossing divisor. We consider algebraic cycles  $Z \in CH^2(\bar X)$, whose 
class restrict to zero on the fibres of $f$, i.e. are in the kernel of the restriction map 
$H^4(\bar X) \lra H^4(fibre)$.
Such a cycle determines a locally split extension
\[0 \lra \V \lra \widehat{\V} \lra \C_S \lra 0\]
of local systems on $S$ that we will describe now. Its fibre over $t \in S$ is
\[  0 \lra H^3(X_t) \lra \widehat{\V}_t \lra \C \lra 0\]
\[ \widehat{\V}_t=\rho^{-1}(\C\cdot[Z_t]),\;\;\rho: H^3(X_t\setminus |Z_t|) \lra H^4_{|Z_t|}(X_t)  \]
where $X_t:=f^{-1}(t)$ and $Z_t:=Z \cap X_t$.  
The choice of a $3$-form $\omega \in H^0(S,E^{3,0})$ determines the Picard-Fuchs differential operator 
$\sL$ which describes the local system $\V=R^3f_*\C_X$ and has all periods
\[\Phi:=\int_{\gamma_t} \omega,\;\;\gamma_t \in H_3(X_t).\]
as solutions.
The cycle $Z$ and the form $\omega$ determine a {\em normal function} in the sense of 
Poincar\'e-Griffiths:
\[\Phi_Z(t)=\int_{\Gamma_t} \omega\]
where the 3-chain $\Gamma_t$ has the property that $\partial \Gamma_t= Z_t$. 
It satisfies an inhomogenous Picard-Fuchs ODE of the form
\[
{\sL} \Phi_Z=g,
\]
where the source function $g$ is a meromorphic function on $\bar S$ \cite{AM}.\\

There is a Higgs part to this story: each cycle $Z \in CH^2(\bar X)$ will also determine
an element in the group $H:=H^1_{(2)}(S,E)$, which in fact will lie in $(2,2)$ component
$H^{2,2}$ of $H$. Let us  indicate how to obtain directly a class in  
\[
H^0(\bar S,E^{1,2} \otimes \Omega^1_{\bar S}(\log D)/\theta(E^{2,1}))
\]
or rather its $L^2$-version
\[
H^0(\bar S,\Omega_{(2)}^1(E)^{1,2}/\theta(\Omega_{(2)}^{0}(E)^{2,1})).
\]

From the class of $Z$ in $H^4(\bar X)$ we first obtain by restriction to $X$ a class in $H^2(\bar X, \Omega^2_{\bar X}(\log \Delta))$ which is mapped to a class in 
\[H^0(\bar S, R^2 \bar f_* \Omega^2_{\bar X}(\log \Delta))\]
by the edge-homomorphism 
$
H^2(\bar X, \Omega^2_{\bar X}(\log \Delta)) \to H^0(\bar S, R^2 \overline{f}_* \Omega^2_{\bar X}(\log \Delta))$
of the Leray spectral sequence. The exact sequence
\[
0 \to \overline{f}^* \Omega^1_{\bar S}(\log D) \otimes \Omega^1_{\bar X/\bar S}(\log \Delta) \to 
\Omega^2_{\bar X}(\log \Delta) \to \Omega^2_{\bar X/\bar S}(\log \Delta) \to 0
\]
gives by taking the direct image under $\bar f_*$ a long exact sequence of sheaves on $\bar S$:
\[
\ldots \lra E^{2,1} \stackrel{\theta}{\lra} \Omega_{\bar S}^1(\log D)\otimes E^{1,2}\lra R^2\bar f_*\Omega^2_{\bar X}(\log \Delta) \lra R^2\bar f_*\Omega^2_{\bar X/\bar S}(\log \Delta) \lra \ldots
\]
From this sequence we get 
\[H^0(E^{1,2} \otimes \Omega^1_{\bar S}(\log D)/E^{2,1}) =Ker( H^0(\bar S, R^2 \bar f_* \Omega^2_{\bar X}(\log \Delta)) \to H^0(\bar S, R^2 \bar f_* \Omega^2_{\bar X / \bar S}(\log \Delta))).
\]
As $Z$ is assumed to be homologous to zero on the fibres, the class of $Z$
indeed is in the kernel and hence we obtain an element
\[  \delta(Z) \in  H^0(E^{1,2} \otimes \Omega^1_{\bar S}(\log D)/\theta(E^{2,1}))   \]
which is called {\em Griffiths' infinitesimal invariant} of $Z$.
However, we know a priori that the class of $Z$ is of type $(2,2)$, so that
the element lies in the  $H^{2,2}$-subspace of $H_{(2)}^1(S,R^3f_*\C_{X})$. Hence
we must have that in fact
\[
\delta(Z) \in H^0(\bar S,\Omega^{1}_{(2)}(E)^{1,2}/\theta(\Omega_{(2)}^{0}(E)^{2,1})).
\]

One can also understand $H_{(2)}^1(E,\theta)$ in terms of {\em extensions of Higgs-bundles}.
If $(E,\theta)$ is the Higgs bundle associated to $\V$, then there is a Higgs bundle 
$(\widehat{E},\widehat{\theta})$ associated to $\widehat \V$. It is obtained as sum of
graded pieces for the Hodge filtration of the Deligne extension of $\widehat \V$. It sits
in an exact sequence 
\[
0 \to (E,\theta) \to (\widehat E,\widehat \theta) \to (\sO_{\bar S},0) \to 0. 
\]
where $(\sO_{\bar S},0)$ is the trivial Higgs bundle.
This extension produces an extension of $L^2$-Higgs complexes
\[
0 \lra \Omega_{(2)}^\bullet (E) \lra \Omega_{(2)}^\bullet (\widehat{E}) \lra \Omega_{(2)}^\bullet (\sO) \lra 0
\]
and the connecting homomorphism of hypercohomology gives an exact sequence
\[
 \ldots \lra H^0(\bar S,\sO) \stackrel{\partial}{\lra} H^1_{(2)}(S,E) \lra H^1_{(2)}(S,\widehat{E})) \lra \ldots
\]
Clearly one has
\[ \partial(1) =\delta(Z).\]

\section{Examples}

\subsection{Mirror Quintic}
The Dwork pencil is the following pencil of quintics in $\P^4$ 
\[ x_1^5+x_2^5+x_3^5+x_4^5+x_5^5-5\psi x_1x_2x_3x_4x_5=0\]
which plays a prominent role in mirror symmetry \cite{candelas}.

Apart from the degeneration into the union of $5$ hyperplanes $x_1x_2x_3x_4x_5=0$ occurring for $\psi=\infty$, the fibres aquire singularities over the fifth 
roots of unity $\psi^5=1$. In fact, these fibres have $125$ nodes, which form a
single orbit under a group $G \approx (\Z/5)^3$ that acts on the whole pencil 
by scalings of the coordinates that preserve each monomial of the equation.

The quotient by the pencil by $G$ has a birational model as pencil of 
quintics \cite{meyer}:
\[(x_1+x_2+x_3+x_4+x_5)^5-(5\psi)^5x_1x_2x_3x_4x_5=0\] 
A crepant resolution of the general fibre is a Calabi-Yau 3-fold with 
Hodge numbers $h^{1,2}=1$, $h^{1,1}=101$, called the {\em mirror quintic}.

The local system $\V=R^3f_*\C$ of this family is unipotent and has $6$ singular
fibres; there is one point of type III over $\psi=\infty$ and the $5$ points of type II over
 the fifth unit roots. So we have $N=6$, $A=5$, $B=0$, $C=1$ and from the dimension formula 
one finds $h^1(\V)=0$. From our formulas we
deduce $a=1$ and $b=2$.
It follows that Griffiths-Yukawa Coupling $\theta^{(3)}$ has $10$ zeroes. It
vanishes twice at the fifth roots of unity.

\medskip After taking a further base change $z \mapsto z^2$, we obtain a family with $A=10$, $B=0$, $C=1$ and $N=11$. By the dimension formula we have $h^1(\V)=5$. As we pull back a unipotent system, the Hodge bundles are obtained by
 base-change. So we have $a=2, b=4$. Our formula gives $h^{4,0}=h^{3,1}=h^{2,2}=1$.\\

The Yukawa-coupling $\theta^{(3)}$ vanishes twice at each of the $10$ 
points of type I. Furthermore, $\theta^{(3)}$ now has a triple zero at 
$0$ due to the ramification, whereas at $\infty$ the ramification does not 
produce a zero, as the pull-back of a point of type III remains of type III. 
So $\theta^{(3)}$ has $23$ zeros in total, in accordance with our formula.\\ 

Hence, $H^1(\bar S, j_*\V)$ is a remarkable Hodge structure of dimension $5$, all
whose Hodge numbers are one. In particular there should be one interesting
$2$-cycle on the total space. It was determined explicitly by Walcher, using
matrix factorizations, \cite{walcher1}.

\subsection{Hypergeometric pencils}

Note that the pencil of the mirror quintic of the previous paragraph 
is the pull-back under the map $z=(5\psi)^{-5}$ of the pencil
\[z(x_1+x_2+x_3+x_4+x_5)^5-x_1x_2x_3x_4x_5=0\]   
The local system $\L=R^3f_*\C$ of this family can be identified with the
local system of solutions to the hypergeometric operator
\[\Ab^4-5z(5\Ab+1)(5\Ab+2)(5\Ab+3)(5\Ab+4),\;\;\;\;(\Ab=z\partial/\partial z)\]
which has three singular points $z=0,z=5^{-5}, \infty$.
The monodromy around $z=0$ of of type $III$, around $z=5^{-5}$ of type $I$ and
around $z=\infty$ it is of order five. The dimension formula gives
\[\dim_\C H^1(\bar S,j_*\L)=1+3+4-8=0\]
When we pull-back over the map $z \lra z^2$, we get a local system $\widetilde{\L}$
with
\[\dim_\C H^1(\bar S,j_*\widetilde{\L})=2+3+4-8=1\]
so there is a cycle. We will need an extension our theory to the quasi-unipotent
case in order to see via $L^2$-Higgs theory that this is indeed a $(2,2)$-class.
This gives an explanation for the appearance of $\sqrt{z}$ in the inhomogenous term
\[\sL \Phi = \frac{c}{(2\pi i)^2}\sqrt{z},\;\; c\in \Q\]
for the inhomogeneous Picard-Fuchs equation of the cycle in $H^{2,2}$.\\

In total there are $14$ such hypergeometric local systems that carry a variation
 of Hodge structures \cite{doranmorgan}: they all arise as mirror families of 
Calabi-Yau complete intersections in weighted projective spaces. 
We consider pull-backs $z \mapsto z^e$ of order $e$, which have 1 point of
type III, $e$ points of type $I$ and possibly a further singularity at $\infty$.   

\newpage
{\bf Seven cases with semi-simple monodromy at infinity.}

\[
\begin{array}{|l|l||c|c|c|c|c|}
\hline
\textup{Operator}&\textup{Model}&e&h^1&h^{4,0}&h^{3,1}&h^{2,2}\\
\hline
\Ab^4-5z(5\Ab+1)(5\Ab+2)(5\Ab+3)(5\Ab+4)&\P^4[5]&1&0&0&0&0\\
                                        &       &2&1&0&0&1\\
                                        &       &5&0&0&0&0\\
                                        &       &10&1&1&1&1\\
\hline
\Ab^4-36z(6\Ab+1)(3\Ab+1)(3\Ab+2)(6\Ab+5)&\P(1,1,1,1,2)[6]&1&0&0&0&0\\
                                                  &       &2&1&0&0&1\\
                                                  &       &6&1&0&0&1\\
\hline
\Ab^4-16z(8\Ab+1)(8\Ab+3)(8\Ab+5)(8\Ab+7)&\P(1,1,1,1,4)[8]&1&0&0&0&0\\
                                                  &       &2&1&0&0&1\\
                                                  &       &8&3&&&\\
\hline
\Ab^4-80z(10\Ab+1)(10\Ab+3)(10\Ab+7)(10\Ab+9)&\P(1,1,1,2,5)[10]&1&0&0&0&0\\
                                                       &       &2&1&0&0&1\\
                                                       &       &10&5&&&\\
\hline
\Ab^4-48z(6\Ab+1)(4\Ab+1)(4\Ab+3)(6\Ab+5)&\P(1,1,1,1,1,1,2)[3,4]&1&0&0&0&0\\
                                                       &       &2&1&0&0&1\\
                                                       &       &4&1&0&0&1\\
                                                       &       &6&3&&&\\
                                                       &       &12&7&&&\\
\hline
\Ab^4-144z(12\Ab+1)(12\Ab+5)(12\Ab+7)(12\Ab+11)&\P(1,1,1,1,2,12)[2,12]&1&0&0&0&0\\
                                                       &       &2&1&0&0&1\\
                                                       &       &12&7&&&\\
                                           
\hline
\Ab^4-12z(4\Ab+1)(3\Ab+1)(3\Ab+2)(4\Ab+3)&\P(1,1,1,2,2,3)[4,6]&1&0&0&0&0\\
                                                       &       &2&1&0&0&1\\
                                                       &       &3&0&0&0&0\\
                                                       &       &4&1&0&0&1\\
                                                       &       &12&7&&&\\
\hline 
\end{array}
\]

{\bf Three cases with one Jordan-block at $\infty$:}
\[
\begin{array}{|l|l||c|c|c|c|c|}
\hline
\textup{Operator}&\textup{Model}&e&h^1&h^{4,0}&h^{3,1}&h^{2,2}\\
\hline
\Ab^4-16z(4\Ab+1)(2\Ab+1)^2(4\Ab+3)&\P^5[2,4]&1&0&0&0&0\\
                                       &     &2&0&0&0&0\\
                                       &     &4&0&0&0&0\\
                                       &     &8&4&1&1&0\\
\hline
\Ab^4-12z(3\Ab+1)(2\Ab+1)^2(3\Ab+2)&\P^6[2,2,3]&1&0&0&0&0\\
                                       &      &2&0&0&0&0\\
                                       &      &3&0&0&0&0\\
                                       &      &6&2&&&\\

\hline
\Ab^4-48z(6\Ab+1)(2\Ab+1)^2(6\Ab+5)&\P^5[2,6]&1&0&0&0&0\\
                                       &      &2&0&0&0&0\\
                                       &      &6&2&&&\\
\hline
\end{array}
\]

For example, the pull-back of the $\P^5[2,4]$-operator by
$z \mapsto z^4$ has $A=5$, $C=1$, so still $h^1=3+5-8=0$, and thus 
it follows $a=1, b=2$. Making an additional quadratic pull-back 
$z \mapsto z^2$ gives a local system with $a=2, b=4$, $A=9$,$B=0$, 
$C=1$, so we get $h^{4,0}=h^{3,1}=1, h^{2,2}=0$!\\
 
{\bf Three cases with two Jordan-blocks at infinity:}
\[
\begin{array}{|l|l||c|c|c|c|c|}
\hline
\textup{Operator}&\textup{Model}&e&h^1&h^{4,0}&h^{3,1}&h^{2,2}\\
\hline
\Ab^4-9z(3\Ab+1)^2(3\Ab+2)^2&\P^5[3,3]&1&0&0&0&0\\
                            &          &2&1&0&0&1\\
                            &          &3&0&0&0&0\\
                            &          &6&3&1&0&1\\
\hline
\Ab^4-16z(4\Ab+1)^2(4\Ab+3)^2&\P^5(1,1,1,1,2,2)[4,4]&1&0&0&0&0\\
                                       &            &2&1&0&0&1\\
                                       &            &4&1&0&0&1\\
                                       &            &8&5&1&0&3\\

\hline
\Ab^4-144z(6\Ab+1)^2(6\Ab+5)^2&\P^5[2,6]&1&0&0&0&0\\
                               &        &2&1&0&0&1\\
                               &        &6&3&&&\\
\hline
\end{array}
\]

For example, the pull-back of the $\P^5[3,3]$-operator by
$z \mapsto z^3$ has $A=3$, $B=1$, $C=1$, so still $h^1=3+2+3-8=0$, and thus 
it follows $a=1, b=1$. Making an additional quadratic pull-back 
$z \mapsto z^2$ gives a local system with $a=2, b=2$, $A=6$,$B=1$, 
$C=1$, so we get $h^{4,0}=1,h^{3,1}=0, h^{2,2}=1$.\\

{\bf One case with a maximal Jordan block at infinity:}
\[
\begin{array}{|l|l||c|c|c|c|c|}
\hline
\textup{Operator}&\textup{Model}&e&h^1&h^{4,0}&h^{3,1}&h^{2,2}\\
\hline
\Ab^4-16z(2\Ab+1)^4&\P^7[2,2,2,2]&1&0&0&0&0\\
                            &    &2&0&0&0&0\\
                            &    &2k &2k-2 &k-1&0&0\\
\hline
\end{array}
\]
Pulling-back by $z \mapsto z^2$ produces a local system with $A=2,B=0,C=2$,
so $h^1=0$, hence $a=1,b=1$. A further pull-back by $z \mapsto z^k$, gives
$A=2k,B=0,C=2$, $a=k, b=k$ and hence $h^{4,0}=k-1$,$h^{3,1}=h^{2,2}=0$; we 
never get any $(2,2)$-classes.


This gives some apriori explanation for which cases one expects the 
appearance of $\sqrt{z}$ in the inhomogenous term
\[\sL \Phi = \frac{c}{(2\pi i)^2}\sqrt{z},\;\; c\in \Q\]
for Picard-Fuchs equation of the cycle in $H^{2,2}$, \cite{walcher1},\cite{walcher2}.

\subsection{A Grassmannian Example}
In \cite{AESZ} over $300$ examples of differential equations of
Calabi-Yau type are collected. Conjecturally all of these have a
geometrical origin and underlie a variation of Hodge structures
of type $(1,1,1,1)$. After the $14$ hypergeometric examples there
are cases that arise from mirror symmetry for complete intersections
in a Grassmannian.
 
The canonical class of the six dimensional Grassmannian $Z:=Gr(2,5)$ is $5 H$, where 
$H$ is the ample generator of $H^2(Z)$ which defines the Pl\"ucker-embedding of $Z$ in 
$\P^{10}$. Consequently, the complete intersection of $Z$ with three hypersurfaces of degree $1,2,2$ is a smooth Calabi-Yau threefold with $h^{1,1}=1$.
In \cite{bcks} a mirror family was described. It is a pencil with $4$ singular points 
$0, \alpha, \beta, \infty$, where
\[\alpha:=\frac{1}{32}(5\sqrt{5}-11),\;\;\;\beta:=\frac{1}{32}(-5\sqrt{5}-11)\]

The Picard-Fuchs operator for this family was computed in \cite{bcks}:
\[\Ab^4-4z (11 \Ab^2+11\Ab+3)(2\Ab+1)^2-16x^2(2\Ab+1)^2(2\Ab+3)^2,\;\;\;(\Ab=z\partial/\partial z).\]
which has a holomorphic solution
\[\Phi(z)=\sum_{n=0}^\infty A_n z^n,\;\;\;A_n:={ 2n \choose n}^2\sum_{k=0}^n {n \choose k}^2{n+k \choose k}\]
(It is Nr.25 in the list \cite{AESZ}).
Around $0$ we have maximal unipotent monodromy, around the points $\alpha$ and
$\beta$ we have a single Jordan-block of size two. 
After a base change $x \mapsto x^2$ we are in a unipotent situation also at $\infty$. The points $\alpha$ and $\beta$ each have 
two preimages of type $A$. The points $\infty$ is of type $B$ (two Jordan blocks). This implies immediately that $1=h^1(\V)=h^{2,2}(\V)$ and the other Hodge numbers are zero. The constants are $A=4$, $B=1$, $C=1$, $a=1$ and $b=1$.

\subsection{A remarkable Hadamard Product } 
The second order operator
\[ \Ab^2 -z(32\Ab^2+ 32\Ab+12)-256(\Ab+1)^2\]
is Picard-Fuchs operator of a rational elliptic surface and defines a local system
$\L$ of rank two. It has a unique period
that is holomorphic near $0$ which has an expansion $\phi(z)=1+12z+164z^2+\ldots+$
\[\phi(z)=\sum_{n=0}^{\infty}A_n z^n,\;\;\;A_n:=\sum_{k=0}^{n}4^{n-k}{2k \choose k}^2{2n-2k\choose n-k}.\]
Its {\em Hadamard square} is the function
\[\Phi(z):=\sum_{n=0}A_n^2 z^n\]
which satisfied the fourth order equation
\[ \sL \Phi =0\]
where $\sL$ is a rather complicated operator (Nr. 115 in the list \cite{AESZ}).
\[\]
The local system $\V$ of solutions of $\sL$ is the multiplicative convolution of the
local system $\L$ with itself and defines a VHS of type $(1,1,1,1)$.
It has only three points of maximal unipotent monodromy, $A=0,B=0,C=3$.
One has $h^1(\V)=3.3-8=1$, and hence $a=1, b=0 (!)$, and $h^{2,2}=1$.
Furthermore, the Griffiths-Yukawa coupling vanishes at a single point
($3(3-2)-2=1$).

\vskip20pt

{\small \sc Pedro Luis del Angel, CIMAT, guanjuato, Mexico}\\
{\small\em E-mail address:}  {\tt luis@cimat.mx}

\vskip 20pt
{\small \sc Stefan M\"uller-Stach, Duco van Straten and Kang Zuo},\\
{\small \sc Institut f\"ur Mathematik, Fachbereich 08, Johannes Gutenberg-Universit\"at, Mainz, Deutschland}\\ 
{\small \em E-mail address:}  {\tt \{stefan, straten, zuok\}@mathematik.uni-mainz.de}

\end{document}